\newtheorem{thm}{Theorem}[section]
\newtheorem{cor}[thm]{Corollary}
\newtheorem{exmp}[thm]{Example}
\newtheorem{lem}[thm]{Lemma}
\newtheorem{prop}[thm]{Proposition}
\newcommand{\DD}{\mathcal{D}}
\newcommand{\LL}{\mathcal{L}}
\newcommand{\RR}{\mathcal{R}}
\newcommand{\Drel}{\mathbin{\DD}}
\newcommand{\suparrow}{{\uparrow}}
\newcommand{\duparrow}{{\downarrow}}
\newcommand{\dd}{\mathbin{\setminus\!\setminus}}
\newcommand{\restricted}[2]{#1{\mid}_{#2}}
\def\dom{\mathrm{dom}}
\def\id{\mathrm{id}}
\begin{document}


\title{On Skew Heyting Algebras}           
{}                 

\author{Karin Cvetko-Vah}            
{}          

%

\maketitle

\begin{abstract}
In the present paper we  generalize the notion of a Heyting algebra to the non-commutative setting and hence introduce what we believe to be the proper notion of the implication in skew lattices. We list several examples of skew Heyting algebras, including Heyting algebras, dual skew Boolean algebras, conormal skew chains and algebras of partial maps with poset domains.\end{abstract}



\section{Introduction}

Non-commutative generalizations of  lattices  were introduced by Jordan  \cite{jordan} in 1949. The current approach to such objects began with  Leech's 1989 paper on skew lattices \cite{L1}.  Similarly, skew Boolean algebras are 
non-commutative generalizations of  Boolean algebras. In 1936 Stone proved that  each Boolean algebra can be embedded into a field of sets \cite{Sto1936}. Likewise, Leech showed in  \cite{L2, L4} that each 
right-handed skew Boolean algebra can be embedded into a generic  skew Boolean  algebra of partial functions from a given set to the codomain $\{0,1\}$. Bignall and Leech \cite{bl} showed that skew Boolean algebras play a central role in the study of discriminator varieties.

 Though not  using  categorical language, Stone essentially proved   in \cite{Sto1936} that the category of Boolean algebras and homomorphisms is dual to the category of Boolean topological spaces and continuous maps.
Generalizations of this result within the commutative setting yield  Priestley duality \cite{P, P1} between bounded distributive lattices and Priestley spaces, and Esakia duality \cite{esakia} between Heyting algebras and Esakia spaces. (See \cite{nick} for details.) In a recent paper \cite{Gehrke} on Esakia's work, Gehrke showed that Heyting algebras
may be understood as those distributive lattices for which the embedding into their
Booleanisation has a right adjoint. A recent line of research  generalized the results of Stone and Priestley  to the 
non-commutative setting. By results  in \cite{BCV} and \cite{K}, any skew Boolean algebra is dual to a sheaf of rectangular bands over a locally-compact Boolean space. A further generalization  given in \cite{skew-priestley} showed that any  strongly distributive skew lattice (as defined below) is dual to a sheaf (of rectangular bands) over a locally compact Priestley space.

 While Boolean algebras provide algebraic models of classical logic, Heyting algebras provide algebraic models of intuitionistic logic.  In the present paper we introduce the notion of a skew Heyting algebra. In passing to the 
non-commutative setting one needs to sacrifice either the top or the bottom of the algebra in order not to end up in the commutative setting. In the previous papers \cite{BCV}, \cite{K} and \cite{skew-priestley} algebras with bottoms were considered, and hence  the notion of distributivity was generalized to the notion of  so-called strong distributivity. If one tried to define an implication operation in the setting of   strongly distributive skew lattices with a bottom as a right adjoint to  conjunction, that would force the skew lattice to  also  possess a top and hence  be commutative,  resulting in a usual Heyting algebra.
In order to define  implication in the skew lattice setting we consider  the $\lor - \land$ duals of strongly distributive skew lattices with a bottom, namely, the  co-strongly distributive skew lattices with a top. That is
 not surprising as a top plays a crucial role in logic. The category of co-strongly distributive skew lattices with a top is, of course, isomorphic to the category of strongly distributive skew lattices with a bottom. In choosing  co-strongly distributive skew lattices with a top we  follow the path paved by Bignall and Spinks in \cite{bs1997}, and by Spinks and Veroff in \cite{SV2006} where dual skew Boolean algebras were introduced.
For further reading on  implications in skew Boolean algebras and their algebraic duals, see \cite{bsv2014}. 

After reviewing some preliminary definitions and concepts in Section 2, in the next
section we introduce the notion of a skew Heyting algebra, prove that such algebras form
a variety and show that the maximal lattice image of a skew Heyting algebra is a
generalized Heyting algebra (possibly without a bottom). Indeed, a co-strongly
distributive skew lattice with a top is the reduct of a skew Heyting
algebra, if and only if its maximal lattice image forms a generalized
Heyting algebra. (See Theorem 3.5.) This leads to a number of useful
corollaries and examples. We finish with Section 4 where we explore the
consequences of our results to  duality theory, and describe how skew Heyting
algebras correspond to  sheaves over local Esakia spaces.

\section{Preliminaries}

A \emph{skew lattice} is an algebra $\mathbf S=(S;\land, \lor)$ of type $(2,2)$ such that
$\wedge$ and $\vee$ are both idempotent and associative and they satisfy the following absorption laws:
\[%
x\land (x\lor y)=x=x\lor (x\land y) \text{ and } (x\land y)\lor y=y=(x\lor y)\land y.
\]
These identities are collectively equivalent to the pair of equivalences: $x\land y=x\Leftrightarrow x\lor y=y$ and $x\land y=y\Leftrightarrow x\lor y=x$.

On a skew lattice $\mathbf S$ one can define the \emph{natural partial order} by stating that $x\leq y$ if and only if $x\lor y=y=y\lor x$, or equivalentely $x\land y=z=y\land x$, and the \emph{natural preorder} by $x\preceq y$ if and only if $y\lor x\lor y=y$, or equivalentely $x\land y\land x=x$.  \emph{Green's equivalence relation $\mathcal D$} is then defined  by 
\begin{equation}\label{eq:relD}
x\mathcal D y \text{ if and only if } x\preceq y \text{ and } y\preceq x.
\end{equation}
%

 \begin{lem} \label{lemma:costa} (\cite{Costa}). For elements $x$ and $y$ elements of a skew lattice $\mathbf S$ the following are equivalent:
    \begin{itemize}
              \item[(i)] $x\leq y$,
              \item[(ii)] $x\lor y\lor x = y$,
              \item[(iii)] $y\land x\land y=x$.
    \end{itemize}
 \end{lem}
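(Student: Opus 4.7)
The plan is to show the implications (i)$\Rightarrow$(ii), (i)$\Rightarrow$(iii), (ii)$\Rightarrow$(i), and (iii)$\Rightarrow$(i), exploiting the definition of $\leq$ together with associativity and idempotence of the two operations. Recall from the definition that $x\leq y$ means $x\lor y=y=y\lor x$, equivalently (by the cited equivalences built into the definition of a skew lattice) $x\land y=x=y\land x$.

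For (i)$\Rightarrow$(ii), I would simply write $x\lor y\lor x=(x\lor y)\lor x=y\lor x=y$, using $x\lor y=y$ and $y\lor x=y$. For (i)$\Rightarrow$(iii), the dual computation gives $y\land x\land y=(y\land x)\land y=x\land y=x$, using $y\land x=x$ and $x\land y=x$. Both use only associativity and the assumption.

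For (ii)$\Rightarrow$(i), the trick is to substitute $y$ by the expression $x\lor y\lor x$ from (ii) and collapse using idempotence of $\lor$. Concretely, $x\lor y=x\lor(x\lor y\lor x)=(x\lor x)\lor y\lor x=x\lor y\lor x=y$, and symmetrically $y\lor x=(x\lor y\lor x)\lor x=x\lor y\lor(x\lor x)=x\lor y\lor x=y$, so $x\leq y$. The implication (iii)$\Rightarrow$(i) is handled by the exact $\land$-dual computation: from $y\land x\land y=x$ one deduces $y\land x=y\land(y\land x\land y)=y\land x\land y=x$ and $x\land y=(y\land x\land y)\land y=y\land x\land y=x$, which is the alternative characterization of $x\leq y$.

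I do not expect any real obstacle here: the whole argument is a short exercise in associativity and idempotence, with the only mildly clever step being the substitution of $y$ by $x\lor y\lor x$ (respectively $y\land x\land y$) when proving (ii)$\Rightarrow$(i) and (iii)$\Rightarrow$(i). No appeal to absorption is required beyond what is already encoded in the equivalent characterizations of $\leq$ stated after the skew lattice axioms.
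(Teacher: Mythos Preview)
Your proof is correct; each of the four implications is a clean application of associativity and idempotence, and the substitution trick in (ii)$\Rightarrow$(i) and (iii)$\Rightarrow$(i) is exactly the right move. Note, however, that the paper does not supply its own proof of this lemma: it is stated with a citation to \cite{Costa} and used as a known fact, so there is no argument in the paper to compare yours against.
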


Leech's First Decomposition Theorem for skew lattices states that the relation $\mathcal D$ is a congruence on a skew lattice $\mathbf S$, $\mathbf S/\mathcal D$ is the maximal lattice
image of $\mathbf S$, and each congruence class is a maximal rectangular skew lattice in $S$ \cite{L1}. Rectangular skew lattices are characterized by $x\land y\land z=x\land z$, or equivalentely $x\lor y\lor z= x\lor z$. 
We  denote the $\DD$-class containing an element $x$ by $\DD_x$.

It was proved in \cite{L1} that a skew lattice always forms a \emph{regular band} for either of the operations $\land$, $\lor$, i.e. it satisfies the identities
\[x\land u\land x\land v\land x = x\land u\land v \land x \text{ and }
x\lor u\lor x\lor v\lor x= x\lor u\lor v \lor x.
\]

A \emph{skew lattice with top} is an algebra $(S;\land, \lor,1)$ of type $(2,2,0)$ such that $(S;\land, \lor)$ is a skew lattice and $x\lor 1=1=1\lor x$, or equivalently $x\land 1=x=1\land x$, holds for all $x\in S$. A skew lattice with bottom is defined dually and the bottom, if it exists, is usually denoted by $0$.

Furthermore, a skew lattice is called 
\emph{strongly distributive} if it satisfies the following identities:
\[x \land (y\lor z)=(x\land y) \lor (x\land z) \text{ and } (x\lor y) \land z = (x\land z) \lor (y\land z);
\]
and it is called
\emph{co-strongly distributive}  if it satisfies the identities:
\[x \lor (y\land z)=(x\lor y) \land (x\lor z) \text{ and } (x\land y) \lor z = (x\lor z) \land (y\lor z).
\]

If a skew lattice $\mathbf S$ is either strongly distributive or co-strongly distributive then $\mathbf S$ is \emph{distributive} in that it satisfies the identities
\[x\land (y\lor z)\land x=(x\land y\land x)\lor (x\land z \land x) \text{ and }
x\lor (y\land z)\lor x=(x\lor y\lor x)\land (x\lor z \lor x).
\]

A skew lattice $\mathbf S$ that is jointly strongly distributive and co-strongly distributive  is \emph{binormal}, i.e. $\mathbf S$ factors as a direct product of a lattice $\mathbf L$ and a rectangular skew lattice $\mathbf B$, $\mathbf S\cong\mathbf  L\times\mathbf  B$, with $\mathbf L$  in this case being distributive. (See  \cite{L4} and \cite{schein}.)

Applying duality to a result of Leech \cite{L4}, it follows that a skew lattice $S$ is co-strongly distributive if and only if $\mathbf S$ is jointly:
\begin{itemize} 
   \item \emph{quasi-distributive}: the maximal lattice image $\mathbf S/\mathcal D$ is a distributive lattice,
   \item \emph{symmetric}: $x\land y=y\land x$ if and only if $x\lor y=y\lor x$, and
   \item \emph{conormal}: $x\lor y \lor z\lor w=x\lor z\lor y\lor w$.
\end{itemize}

If a skew lattice is conormal then given any $u\in S$ the set
\[u\suparrow=\{ u\lor x\lor u\, |\, x\in S\}= \{ x\in S\,|\, u\leq x\}
\]
forms a (commutative) lattice for the induced operations $\land$ and $\lor$,
cf. \cite{L4}.

The following lemma is the dual of a well known result in  skew lattice theory.

\begin{lem}\label{lemma:conormal}
Let $\mathbf S$ be a conormal skew lattice and let $A$ and $B$ be $\DD$-classes such that $B\leq A$ holds in the lattice $\mathbf S/\DD$. Given $b\in B$ there exists a unique $a\in A$ such that $b\leq a$.
\end{lem}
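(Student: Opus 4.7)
The plan is to handle existence and uniqueness separately.

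For existence, I would fix any $a_0\in A$. The hypothesis $B\leq A$ in $\mathbf S/\DD$ means $a_0\lor b \Drel a_0$, and then the rectangular identity $x\lor y\lor z = x\lor z$ inside the $\DD$-class $A$ yields
\[
a_0\lor b\lor a_0 = a_0,
\]
i.e.\ $b\preceq a_0$. The candidate for the required lift is
\[
a := b\lor a_0\lor b.
\]
Two things remain to check: that $a\Drel a_0$, so $a\in A$, and that $b\leq a$. For $b\leq a$, Lemma~\ref{lemma:costa}(ii) says it suffices to verify $b\lor a\lor b = a$, which is immediate from idempotence: $b\lor(b\lor a_0\lor b)\lor b = b\lor a_0\lor b = a$.

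For $a\Drel a_0$, I would compute $a\lor a_0\lor a$ and $a_0\lor a\lor a_0$ and simplify each using conormality, $x\lor y\lor z\lor w = x\lor z\lor y\lor w$, together with idempotence. The workhorse simplification is the collapse of alternating words: $b\lor a_0\lor b\lor a_0 = b\lor b\lor a_0\lor a_0 = b\lor a_0$, and dually $a_0\lor b\lor a_0\lor b = a_0\lor b$. Applied to $a\lor a_0\lor a = b\lor a_0\lor b\lor a_0\lor b\lor a_0\lor b$, iterated use of this identity telescopes the expression back down to $a$; applied to $a_0\lor a\lor a_0 = a_0\lor b\lor a_0\lor b\lor a_0$, one collapse followed by the identity $a_0\lor b\lor a_0 = a_0$ gives $a_0$. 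Hence $a\Drel a_0$.

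For uniqueness, assume $a,a'\in A$ both satisfy $b\leq\cdot$. Both then lie in the upset $b\suparrow$, which the remark preceding the statement identifies as a commutative lattice under the induced operations (this is where conormality is used on the uniqueness side). The relation $a\Drel a'$ is the conjunction of two equations and therefore restricts to the subskew-lattice $b\suparrow$; but in any lattice the relation $\DD$ is the identity, so $a=a'$.

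The main obstacle I anticipate is the bookkeeping in the verification $a\Drel a_0$: the two computations are short only after one has spotted that conormality together with idempotence collapses alternating words in $\{b,a_0\}$ to length at most two. Everything else is direct unwinding of definitions, plus the already-established fact that $b\suparrow$ is a lattice.
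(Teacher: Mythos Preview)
Your proof is correct. The existence half is essentially the paper's argument: both set $a=b\lor a_0\lor b$ and verify $b\leq a$ via $b\lor a\lor b=a$. The paper simply asserts $a\in A$ without comment (it follows at once from $\DD$ being a congruence together with $[b]\leq[a_0]$ in $S/\DD$), whereas you verify $a\Drel a_0$ directly from conormality; either is fine.

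For uniqueness the two arguments diverge. The paper computes head-on: from $b\leq a,a'$ it has $a=b\lor a\lor b$ and $a'=b\lor a'\lor b$, and then uses conormality and $a\Drel a'$ to rewrite
\[
a=b\lor a\lor b=b\lor a\lor a'\lor a\lor b=b\lor a\lor a'\lor b=b\lor a'\lor a\lor a'\lor b=b\lor a'\lor b=a'.
\]
You instead invoke the fact, recorded just before the lemma, that $b\suparrow$ is a (commutative) lattice under the induced operations; since $\DD$ is equationally defined it restricts to $b\suparrow$, and in a lattice $\DD$ collapses to equality, forcing $a=a'$. Your route is more conceptual and reuses an already-stated structural fact; the paper's route is self-contained and shows exactly how conormality does the work. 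Both are short and either would be acceptable.
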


\begin{proof}
First the uniqueness. If $a$ and $a'$ both satisfy the desired property then by Lemma \ref{lemma:costa} we have $a=b\lor a\lor b$ and likewise $a'=b\lor a'\lor b.$ Now, using idempotency of $\lor$, conormality  and the fact that $a\Drel a'$ we obtain: 
\begin{multline*}
a=b\lor a \lor b = b\lor a \lor a'\lor a \lor b = \\ 
b\lor a\lor a'\lor b=
b\lor a'\lor a\lor a'\lor b = b\lor a'\lor b =a'. 
\end{multline*}
To prove the existence of $a$ take any $x\in A$ and set $a=b\lor x\lor b$. Then $a\in A$ and using the idempotency of $\lor$ we get:
\[b\lor a\lor b = b\lor (b\lor x\lor b) \lor b= b\lor x\lor b= a
\]
which implies $b\leq a$.
\end{proof}

An important class of strongly distributive skew lattices that have a bottom is the class of skew Boolean algebras where by a \emph{skew Boolean algebra} we mean an algebra $\mathbf S=(S;\land, \lor,\setminus, 0)$ where  $(S;\land, \lor, 0)$ is a strongly distributive skew lattice with bottom $0$, and $\setminus$ is a binary operation on $S$ such that both $(x\land y\land x)\lor (x\setminus y) = x = (x\setminus y)\lor (x\land y\land x)$ and  $(x\land y\land x)\land (x\setminus y) = 0 = (x\setminus y)\land (x\land y\land x)$. Given any element $u$ of a {skew Boolean algebra} $\mathbf S$ the set
\[u\duparrow =\{u\land x\land u \,|\, x\in S\}=\{x\in S \,|\, x\leq u \}
\]
is a Boolean algebra with top $u$ and with $u\setminus x$ being the complement of $u\land x\land u$ in $u\duparrow$.

Recall that a \emph{Heyting algebra} is an algebra $\mathbf H=(H;\land,\lor, \rightarrow, 1, 0)$ such that $(H,\land,\lor, 1, 0)$ is a bounded distributive lattice that satisfies the condition:
\begin{description}
   \item[(HA)] $x\land y\leq z$ iff $x\leq y\rightarrow z$.
 \end{description}
Stated otherwise, $\forall y,z\in H$ the sublattice $\{x\in H\,|\, x\land y\leq z\}$ is nonempty and contains a top element to be denoted by $y\rightarrow z$. Thus, given a bounded distributive lattice $(H; \land, \lor, 1, 0)$, if a binary operation $\rightarrow$ exists that makes $(H; \land, \lor, \rightarrow, 1, 0)$ a Heyting algebra, then it is unique because it is already there implicitly. Indeed, given two isomorphic lattices, if either is the lattice reduct of a Heyting algebra then so is the other, and both are isomorphic as Heyting algebras.
 
Equivalently, (HA) can be replaced by the following set of identities:
\begin{description}
     \item[(H1)] $(x\rightarrow x)=1$,
     \item[(H2)] $x\land (x\rightarrow y)=x\land y$,
     \item[(H3)] $y\land (x\rightarrow y)=y$,
     \item[(H4)] $x\rightarrow (y\land z)=(x\rightarrow y)\land (x\rightarrow z).$
\end{description}

\begin{lem}
In any Heyting algebra, $x\rightarrow y = (x\lor y)\rightarrow y$.
\end{lem}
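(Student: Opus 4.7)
The plan is to establish the identity by proving two inequalities, using the adjunction (HA) together with the identities (H2) and distributivity of the underlying lattice.

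For the inequality $(x \lor y) \rightarrow y \leq x \rightarrow y$, I would argue directly from the adjunction. Set $w = (x \lor y) \rightarrow y$. By (HA) applied to $w \leq (x \lor y) \rightarrow y$, we have $w \land (x \lor y) \leq y$. Since $x \leq x \lor y$, monotonicity of $\land$ gives $w \land x \leq w \land (x \lor y) \leq y$, and another application of (HA) yields $w \leq x \rightarrow y$.

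For the reverse inequality $x \rightarrow y \leq (x \lor y) \rightarrow y$, by (HA) it suffices to verify $(x \rightarrow y) \land (x \lor y) \leq y$. Here I would use distributivity of $\land$ over $\lor$ in $\mathbf H$ to write
\[
(x \rightarrow y) \land (x \lor y) = \bigl((x \rightarrow y) \land x\bigr) \lor \bigl((x \rightarrow y) \land y\bigr).
\]
The first term equals $x \land y$ by (H2), which is below $y$, and the second term is trivially below $y$; hence the join is below $y$.

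Combining the two inequalities gives $x \rightarrow y = (x \lor y) \rightarrow y$. The main thing to be careful about is applying (H2) in the correct form, $x \land (x \rightarrow y) = x \land y$, and recalling that distributivity is available because every Heyting algebra has a distributive lattice reduct; both are essentially immediate from the hypotheses assembled before the lemma, so I do not expect a real obstacle.
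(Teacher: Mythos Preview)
Your argument is correct. Both inequalities are established cleanly: the first via (HA) and monotonicity of $\land$, the second via distributivity together with (H2), exactly as you describe. The only cosmetic point is that when you invoke (H2) in the form $(x\rightarrow y)\land x = x\land y$, you are silently using commutativity of $\land$; this is of course fine in a Heyting algebra, but worth stating since the paper's (H2) is written as $x\land(x\rightarrow y)=x\land y$.

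As for comparison with the paper: the paper states this lemma without proof, so there is nothing to compare against. Your proof is a perfectly standard one and would serve well here.
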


A \emph{generalized Heyting algebra} is an algebra $\mathbf A=(A;\land,\lor,\rightarrow,  1)$ such that the reduct $(A,\land,\lor,  1)$ is a distributive lattice with top 1, and condition (HA) holds. If it also has a
bottom, it is a Heyting algebra. In general, each upset $u\suparrow$ forms a Heyting algebra. The
identities above also characterize this more general class of algebras, which are often
called \emph{Brouwerian algebras}.

\section{Skew Heyting algebras}

A \emph{skew  Heyting lattice} is an algebra $\mathbf S=(S;\land, \lor , 1)$ of type $(2,2,0)$ such that:
\begin{itemize}
   \item $(S;\land ,\lor, 1) $ is a co-strongly distributive skew lattice with top $1$. Each upset $u\sup$ is thus a bounded distributive lattice.
   \item for any $u\in S$ an operation $\rightarrow_u $ can be defined on  $u\suparrow$ such that $(u\suparrow; \land, \lor , \rightarrow_u, 1, u)$ is a Heyting algebra with top $1$ and bottom $u$.  
\end{itemize} 

Given a skew  Heyting lattice $\mathbf S$, define  $\rightarrow $ on $S$ by setting  
$$x\rightarrow y=(y\lor x\lor y)\rightarrow_y y.$$
A \emph{skew Heyting algebra} is an algebra $\mathbf S =(S;\land,\lor,\rightarrow, 1)$ of type $(2,2,2,0)$ such that $(S;\land,\lor, 1)$ is a  skew  Heyting lattice and $\rightarrow$ is the implication thus induced. A sense of global coherence for $\rightarrow$ on $S$ is given by:

\begin{lem}\label{lemma:impl-well-defined} Let $\mathbf S$ be a skew  Heyting  lattice with $\rightarrow$  as defined above and let $x,y,u\in S$ be such that both $x,y\in u\suparrow$ hold. Then $x\rightarrow y= x\rightarrow_u y$.
\end{lem}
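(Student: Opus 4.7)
The plan is to reduce the identity to a statement about two nested Heyting algebras $y\suparrow \subseteq u\suparrow$, then compare the two implications via their universal property.

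First I would observe that since $u \leq x$ and $u \leq y$, both $x$ and $y$ lie in the commutative lattice $u\suparrow$, so by the commutativity and idempotency of $\lor$ on $u\suparrow$ we get $y\lor x\lor y = x\lor y$. Write $v := x\lor y$. Then on the one hand $x\rightarrow y = v \rightarrow_y y$ by the definition of $\rightarrow$. On the other hand, applying the previous lemma $x\rightarrow y = (x\lor y)\rightarrow y$ inside the Heyting algebra $u\suparrow$, we get $x\rightarrow_u y = v\rightarrow_u y$. So it suffices to prove that $v\rightarrow_u y = v\rightarrow_y y$.

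Next, since $u\leq y$, the upset $y\suparrow$ is contained in $u\suparrow$, and the induced lattice operations and order agree. Both elements $v\rightarrow_u y$ and $v\rightarrow_y y$ are characterised by the same universal property inside their respective Heyting algebras, namely being the largest $z$ with $v\land z\leq y$. The candidates for $v\rightarrow_y y$ (elements of $y\suparrow$) form a subset of the candidates for $v\rightarrow_u y$ (elements of $u\suparrow$), so immediately $v\rightarrow_y y \leq v\rightarrow_u y$.

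For the reverse inequality, the key observation is that $v\rightarrow_u y$ actually lies in $y\suparrow$. This follows from identity (H3) applied in $u\suparrow$: $y\land (v\rightarrow_u y) = y$, hence $y\leq v\rightarrow_u y$, so $v\rightarrow_u y\in y\suparrow$. Thus $v\rightarrow_u y$ is itself a candidate in the computation of $v\rightarrow_y y$, which gives $v\rightarrow_u y \leq v\rightarrow_y y$. Combining the two inequalities yields $v\rightarrow_u y = v\rightarrow_y y$, completing the proof.

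The only delicate point is confirming that the restriction of the skew-lattice operations to $u\suparrow$ really does produce the commutative, distributive lattice structure in which (H3) can be applied; this is guaranteed by conormality (one of the three conditions equivalent to co-strong distributivity) together with the hypothesis that each $u\suparrow$ carries a Heyting algebra structure. No serious obstacle beyond this bookkeeping is expected.
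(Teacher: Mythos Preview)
Your proof is correct and follows essentially the same route as the paper's: reduce $y\lor x\lor y$ to $x\lor y$ using commutativity in $u\suparrow$, observe via (H3) that both $(x\lor y)\rightarrow_y y$ and $(x\lor y)\rightarrow_u y$ lie in $y\suparrow$, and then invoke the maximal-element (universal) characterization of Heyting implication to conclude they coincide. The only difference is that you spell out the two inequalities explicitly, whereas the paper compresses this into a single appeal to the ``maximal element characterization.''
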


\begin{proof}
As $x$ and $y$ both lie in $u\suparrow$, they commute. By the definition of  $\rightarrow$,  $x\rightarrow y=(x\lor y)\rightarrow_y y\geq y$ by (H3). On the other hand, since $\rightarrow_u$ is the Heyting implication in the Heyting algebra $u\suparrow$ it follows that $x\rightarrow_u y= (x\lor y)\rightarrow_u y\geq y$. Thus $y$, $x\lor y$, $(x\lor y)\rightarrow_y y$ and $(x\lor y)\rightarrow_u y$ all lie iin the Heyting algebra $y\suparrow$. The maximal element characterization of both $(x\lor y)\rightarrow_y y$ and $(x\lor y)\rightarrow_u y$ forces both to agree.
%
%
\end{proof}

We will use the axioms of Heyting  algebras to derive an  axiomatization of {skew Heyting algebras}. The reader should find most  of the axioms of Theorem \ref{theorem:axiomatization} below to be intuitively clear generalizations to the non-commutative case. However, two axioms should be given further explanation. Firstly, the $u$ in axiom (SH4) below appears since upon passing to the non-commutative case, an element that is both below $x$ and $y$ with respect to the  partial order $\leq$ no longer need exist.  (We can have $x\land y\land x\leq x$ but not $x\land y\land x\leq y$  in general.) Similarly, axiom (SH0) is needed since in the non-commutative case it no longer follows from the other axioms, the reason being that in general $x\leq y\lor x\lor y$ need not hold. 

 \begin{thm}\label{theorem:axiomatization} Let  $(S;\land,\lor,\rightarrow, 1)$ be an algebra of type $(2,2,2,0)$ such that $(S;\land,\lor, 1)$ is a co-strongly distributive skew lattice with top $1$. Then $(S;\land,\lor,\rightarrow, 1)$ is a skew Heyting algebra if and only if it  satisfies the following axioms:
\begin{description}
  \item[(SH0)] $x \rightarrow y = (y\lor x\lor y)\rightarrow y$.
  \item[(SH1)] $x\rightarrow x=1$,
  \item[(SH2)] $x\land (x\rightarrow y) \land x = x\land y \land x$, 
  \item[(SH3)] $y\land (x\rightarrow y)=y$ and  $(x\rightarrow y)\land y=y$,
  \item[(SH4)] $x \rightarrow (u\lor (y\land z)\lor u)=
  ( x\rightarrow (u\lor y\lor u)) \land (x \rightarrow (u\lor  z\lor u))$.
\end{description}
\end{thm}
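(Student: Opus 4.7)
The plan is to prove the equivalence in both directions, using the characterization of each $u\suparrow$ as a Heyting algebra on the one side and the Heyting axioms (H1)--(H4) on the other.

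$(\Rightarrow)$ Assume $\mathbf{S}$ is a skew Heyting algebra. Axiom (SH0) is just the definition of $\rightarrow$; (SH1) is immediate since $x\rightarrow x = (x\lor x\lor x)\rightarrow_x x = 1$ by (H1) in $x\suparrow$; and (SH3) follows from the fact that $x\rightarrow y = (y\lor x\lor y)\rightarrow_y y \in y\suparrow$, whence $y\leq x\rightarrow y$. For (SH2), set $c = x\rightarrow y$ and $w = y\lor x\lor y$. Weak distributivity expands $x\land w\land x$, and the observation $x\land y\land x \leq x$ collapses it to $x\land w\land x = x$. In the commutative Heyting algebra $y\suparrow$, (H2) gives $w\land c\land w = y$. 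Substituting $x = x\land w\land x$ into both $x$'s of $x\land c\land x$ and applying the regular band identity $x\land u\land x\land v\land x = x\land u\land v\land x$ twice yields
\begin{align*}
x\land c\land x &= x\land w\land x\land c\land x\land w\land x \\
&= x\land w\land c\land w\land x = x\land y\land x.
\end{align*}
For (SH4), put $p = u\lor y\lor u$, $q = u\lor z\lor u$, and $r = p\land q$; by co-strong distributivity $r = u\lor(y\land z)\lor u$, and $p,q\in r\suparrow$. Conormality shows $(r\lor x\lor r)\lor p = p\lor x\lor p$, so by the Heyting identity $a\rightarrow b = (a\lor b)\rightarrow b$ inside $r\suparrow$, $(r\lor x\lor r)\rightarrow_r p = (p\lor x\lor p)\rightarrow_r p$; since $p$ is the target, the Heyting implication is insensitive to whether it is computed in $r\suparrow$ or in $p\suparrow$, so this equals $x\rightarrow p$, and analogously for $q$. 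Applying (H4) in $r\suparrow$ gives (SH4).

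$(\Leftarrow)$ Conversely, assume (SH0)--(SH4). For each $u\in S$, define $\rightarrow_u$ on $u\suparrow$ by restriction, $a\rightarrow_u b := a\rightarrow b$; by (SH3) we have $a\rightarrow b \geq b \geq u$ whenever $b\in u\suparrow$, so this is well-defined. Inside the commutative bounded distributive lattice $u\suparrow$, the skew axioms specialize to the Heyting axioms: (SH1) is (H1); (SH2) becomes (H2) using commutativity and idempotency; (SH3) is (H3); and (SH4) reduces to (H4) because $u\lor b\lor u = b$ for every $b\geq u$. Hence each $(u\suparrow; \land, \lor, \rightarrow_u, 1, u)$ is a Heyting algebra, and (SH0) asserts precisely that $\rightarrow$ is recovered from these $\rightarrow_u$'s as in the definition of a skew Heyting algebra.

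The main obstacle is the forward verification of (SH2): the noncommutativity prevents a one-line argument, forcing a careful interplay between weak distributivity (to show $x\land w\land x = x$), the regular band identity for $\land$ (to move between $x\land c\land x$ and $x\land w\land c\land w\land x$), and the classical axiom (H2) inside the commutative Heyting subalgebra $y\suparrow$ (to show $w\land c\land w = y$).
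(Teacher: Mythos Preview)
Your proof is correct and follows essentially the same strategy as the paper's: verify (SH0)--(SH4) from the Heyting structure on the upsets, and conversely show that the restriction of $\rightarrow$ to each $u\suparrow$ satisfies (H1)--(H4). The only noteworthy difference is in (SH4): the paper first reduces (SH4) to the variant (SH4$'$) with $u\lor x\lor u$ in place of $x$ and then works entirely in $u\suparrow$, whereas you work in $r\suparrow$ with $r=u\lor(y\land z)\lor u$ and invoke the coherence Lemma~\ref{lemma:impl-well-defined} to identify $(p\lor x\lor p)\rightarrow_r p$ with $x\rightarrow p$; both routes are short and equivalent.
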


\begin{proof}
Assume that $\mathbf S$ is a skew Heyting algebra.

(SH0). Both $x\rightarrow y$ and  $(y\lor x\lor y)\rightarrow y$ are defined as  $(y\lor x\lor y)\rightarrow_y y$. Thus they are equal.

(SH1). This is true because $1\land x=x$ is true in $x\suparrow$.

(SH2). In $y\suparrow$ (H2) implies  $(y\lor x\lor y)\land ((y\lor x\lor y)\rightarrow_y y)=(y\lor x\lor y)\land y=y.$ Hence
\[
x\land (y\lor x\lor y)\land (x\rightarrow y) \land x=x\land y \land x.
\]
On the other hand,
\[
x\land (y\lor x\lor y)\land (x\rightarrow y) \land x=x\land (y\lor x\lor y)\land x\land (x\rightarrow y) \land x=x\land (x\rightarrow y)\land x,
\]
where we have used the regularity of $\land$ and the fact that $x\preceq y\lor x\lor y$.

(SH3). Both identities hold because $y\land (y\lor x\lor y)=y$ in $y\suparrow$. Thus $x\rightarrow y=(y\lor x\lor y)\rightarrow y\geq y$.

(SH4).
First note that (SH4) is equivalent to 
\begin{itemize}
\item[(SH4')] $(u\lor x \lor u)\rightarrow (u\lor (y\land z)\lor u)=
  ((u\lor x \lor u)\rightarrow (u\lor y\lor u)) \land ((u\lor x \lor u)\rightarrow (u\lor  z\lor u)).$
 \end{itemize} 
Indeed, (SH0) and the conormality of $\lor$ give both
  \[(u\lor x\lor u) \rightarrow (u\lor w\lor u) = (u\lor x\lor w\lor u) \rightarrow (u\lor w\lor u)
  \]
   and  
   \[ x \rightarrow (u\lor w\lor u) = (u\lor x\lor w\lor u) \rightarrow (u\lor w\lor u)
   \]
so that
\[x\rightarrow (u\lor w\lor u)=(u\lor x\lor u)\rightarrow (u\lor w\lor u).
\]
Hence it suffices to prove that (SH4') holds. 

Observe that distributivity implies  
\begin{equation}\label{eq-sh4}
(u\lor y\lor u)\land (u\lor z\lor u)=u\lor (y\land z)\lor u.
\end{equation}
 Since $u\lor x\lor u$, $u\lor y\lor u$, $u\lor z\lor u$ and $u\lor (y\land z)\lor u$ all lie in $u\suparrow$ we can simply compute in $u\suparrow$. Using \eqref{eq-sh4} and  axiom (H4) for Heyting algebras we obtain: 
$(u\lor x \lor u)\rightarrow (u\lor (y\land z)\lor u)= (u\lor x \lor u)\rightarrow ((u\lor y\lor u)\land (u\lor z\lor u))
=
  ((u\lor x \lor u)\rightarrow (u\lor y\lor u))\land ((u\lor x \lor u)\rightarrow (u\lor  z\lor u))$.

To prove the converse assume that (SH0)--(SH4) hold. Given arbitrary $u\in S$ and  $x,y, z\in u\suparrow$ set $x\rightarrow_u y=x\rightarrow y$. Axiom (SH3) implies that $x\rightarrow y\in y\suparrow \subseteq u\suparrow$. Thus the restriction $\rightarrow_u$ of $\rightarrow$ to $u\suparrow$ is well defined. Since $u\suparrow$ is commutative with bottom $u$, axioms (SH1)--(SH4) for $\rightarrow$ respectively simplify to (H1)--(H4) for $\rightarrow_u$, making $\rightarrow_u$ the Heyting implication on $u\suparrow$.  Axiom (SH0) assures that $\rightarrow$ is indeed the skew Heyting implication satisfying $a\rightarrow b=(b\lor a\lor b)\rightarrow_b b,$ for any $a,b\in S$.  
\end{proof}

\begin{cor}
Skew Heyting algebras form a variety.
\end{cor}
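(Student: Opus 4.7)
The plan is to invoke Birkhoff's HSP theorem: a class of algebras (of a fixed signature) is a variety precisely when it can be axiomatized by identities. So the task reduces to exhibiting an equational axiomatization of skew Heyting algebras in the signature $(\wedge,\vee,\rightarrow,1)$ of type $(2,2,2,0)$.

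First I would observe that the class of co-strongly distributive skew lattices with top is already equational: associativity and idempotency of $\wedge$ and $\vee$, the two absorption laws, the identities $x\vee 1 = 1 = 1\vee x$ (equivalently $x\wedge 1 = x = 1\wedge x$), and the two co-strong distributivity identities
\[x\vee(y\wedge z) = (x\vee y)\wedge(x\vee z), \qquad (x\wedge y)\vee z = (x\vee z)\wedge(y\vee z)\]
are all identities in the signature $(\wedge,\vee,1)$, hence certainly in the signature $(\wedge,\vee,\rightarrow,1)$.

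Next I would appeal directly to Theorem \ref{theorem:axiomatization}: it asserts that, on top of the above, an algebra $(S;\wedge,\vee,\rightarrow,1)$ of the correct type is a skew Heyting algebra if and only if it additionally satisfies (SH0)–(SH4). Each of these is written as an equation between terms in $\wedge,\vee,\rightarrow,1$ (no quantifiers, no inequalities, no existence clauses), so together with the skew-lattice axioms they give a finite equational basis. By Birkhoff's theorem, the class of skew Heyting algebras is therefore closed under homomorphic images, subalgebras, and direct products, and hence is a variety.

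There is no real obstacle: the only thing to check is that the axioms listed in Theorem \ref{theorem:axiomatization} are genuine identities rather than conditional statements, which is immediate by inspection. The corollary is thus essentially a bookkeeping consequence of having already packaged the defining condition ``each $u\suparrow$ is a Heyting algebra and $\rightarrow$ is the induced operation'' into the equational form (SH0)–(SH4).
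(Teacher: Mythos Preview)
Your argument is correct and is exactly the intended one: the paper states this corollary without proof immediately after Theorem~\ref{theorem:axiomatization}, since (SH0)--(SH4) together with the equational axioms for co-strongly distributive skew lattices with top give a finite set of identities defining the class. Invoking Birkhoff's theorem, as you do, is precisely what is implicit in calling it a corollary.
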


In the remainder of the paper, given a skew Heyting algebra  we shall simplify the notation $\rightarrow _u$ to $\rightarrow$ when referring to the Heyting implication in $u\suparrow$. Remarks made about Heyting algebras in Section 2 apply here also. Given a co-strongly distributive skew lattice $(S;\land, \lor,1)$ with a top $1$, if a binary operation $\rightarrow$ exists that makes  $(S;\land, \lor,\rightarrow,1)$ a skew Heyting algebra, then it is unique since it is already there implicitly. Hence, given two isomorphic skew lattices, if either is the reduct of a skew
Heyting algebra, then so is the other and both are isomorphic as skew Heyting algebras.

\begin{prop}\label{lemma:implication-Drel}
The relation $\DD$ defined in \eqref{eq:relD} is a congruence on any skew Heyting algebra.
\end{prop}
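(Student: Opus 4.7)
The plan is to show that $\rightarrow$ respects $\DD$, since Leech's First Decomposition Theorem already guarantees that $\DD$ is a congruence for $\land$ and $\lor$. So, assuming $x\Drel x'$ and $y\Drel y'$, the goal is $(x\rightarrow y)\Drel(x'\rightarrow y')$. I will split this into two subclaims handled in turn: (i) if $x\Drel x'$ and $y$ is fixed, then $x\rightarrow y=x'\rightarrow y$ (honest equality, not merely $\DD$-equivalence); and (ii) if $y\Drel y'$ and $x$ is fixed, then $(x\rightarrow y)\Drel(x\rightarrow y')$. Combining them yields the proposition.

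Subclaim (i) should be essentially immediate: by (SH0), $x\rightarrow y$ depends on $x$ only through $y\lor x\lor y$, and the elements $y\lor x\lor y$ and $y\lor x'\lor y$ both lie in the commutative lattice $y\suparrow$ and are $\DD$-equivalent, hence equal (since $\DD$ is the identity on any commutative lattice). The core work is Subclaim (ii); my approach is to introduce the ``translation'' map $T_y\colon y'\suparrow\to y\suparrow$ given by $T_y(a)=y\lor a\lor y$ and to prove that it is a bounded lattice isomorphism sending $y'$ to $y$ and $1$ to $1$. Preservation of $\land$ will come from the distributivity identity $y\lor(a\land b)\lor y=(y\lor a\lor y)\land(y\lor b\lor y)$; preservation of $\lor$ from conormality plus idempotence; the equality $T_y(y')=y$ directly from $y\Drel y'$; and bijectivity (with inverse $T_{y'}$) from Lemma~\ref{lemma:conormal}, since $T_{y'}(T_y(a))$ lies in $y'\suparrow$ and shares its $\DD$-class with $a$, forcing equality by the uniqueness of lifts. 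Because Heyting implication on a bounded distributive lattice is uniquely determined by the lattice structure, $T_y$ is automatically an isomorphism of the Heyting algebras $y'\suparrow$ and $y\suparrow$.

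To close Subclaim (ii), the plan is to compute
\[
T_y(x\rightarrow y')=T_y\bigl((y'\lor x\lor y')\rightarrow_{y'}y'\bigr)=T_y(y'\lor x\lor y')\rightarrow_y y.
\]
Lemma~\ref{lemma:conormal} then identifies $T_y(y'\lor x\lor y')$ with $y\lor x\lor y$ (both lie above $y$ and share a $\DD$-class in the distributive lattice $\mathbf S/\DD$), yielding $T_y(x\rightarrow y')=(y\lor x\lor y)\rightarrow_y y=x\rightarrow y$. Since the output of $T_y$ always lies in the same $\DD$-class as its input, this proves $(x\rightarrow y)\Drel(x\rightarrow y')$. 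The main technical obstacle will be verifying cleanly that $T_y$ is a bounded lattice isomorphism; once this is in place, the rest reduces to a short application of Lemma~\ref{lemma:conormal} and axiom (SH0).
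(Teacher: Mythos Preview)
Your proposal is correct and follows essentially the same approach as the paper: the key engine in both arguments is the translation map $a\mapsto y\lor a\lor y$ between upsets $y'\suparrow$ and $y\suparrow$, shown to be a lattice (hence Heyting) isomorphism whose outputs stay in the same $\DD$-class as its inputs. The only organizational difference is that the paper handles both variables at once via a single map $\varphi\colon b\suparrow\to d\suparrow$, whereas you first freeze $y$ to get honest equality $x\rightarrow y=x'\rightarrow y$ and then vary $y$; this two-step decomposition is a mild repackaging of the same idea.
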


\begin{proof}
Let  $(S;\land, \lor,\rightarrow, 1)$ be a skew Heyting algebra. Since $\mathcal D$ is  a congruence for co-strongly distributive skew lattices with a top, we only need to prove   $(a\rightarrow b)\Drel (c\rightarrow d)$ holds for any $a,b,c,d\in S$ satisfying $a\Drel c$ and $b\Drel d$.
Without loss of generality we may assume $b\leq a$ and $d\leq c$. (Otherwise replace $a$ by $b\lor a\lor b$ and $c$ by $d\lor c\lor d$.)

To begin, define a map $\varphi : b\suparrow \to d\suparrow$ by setting $\varphi(x) = d\lor x\lor d$. We claim that $\varphi$ is a lattice isomorphism of  $ (b\suparrow; \land, \lor)$ with $(d\suparrow; \land, \lor)$,  with  inverse  $\psi: d\suparrow \to b\suparrow$ given by $\psi (y)=b\lor y\lor b$. It is easily seen that  $\varphi $ and $\psi$ are inverses of each other. For instance,  
$\psi(\varphi (x)) = b\lor d\lor x\lor d\lor b$  equals $(b\lor d\lor b) \lor x\lor (b\lor d\lor b)$ by the regularity of $\lor$. But the latter reduces to $b\lor x\lor b $ because $b\mathcal D d$, and since $x\in b\suparrow$ it reduces further to  $x$ by Lemma \ref{lemma:costa}, giving $\psi(\varphi (x))=x$.
$\varphi$ must preserve $\land$ and $\lor$. Indeed distributivity gives:
\[\varphi (x\land x') = d\lor (x\land x')\lor d=(d\lor x\lor d)\land (d\lor x'\lor d)=\varphi(x)\land \varphi(x').
\]
And the regularity gives:
\[\varphi(x\lor x') = d\lor (x\lor x') \lor d=(d\lor x \lor d) \lor (d\lor x'\lor d)=\varphi (x) \lor \varphi(x').
\]
Thus $\varphi$ (and $\psi$) is a lattice isomorphism of $b\suparrow$ with $d\suparrow$. But then $\varphi$ and $\psi$ are also isomorphisms of Heyting algebras. That is, e.g., $\varphi(x\rightarrow y)=\varphi(x)\rightarrow \varphi(y)$.

Next, observe that $x\Drel \varphi(x)$  for all $x\in b\suparrow$. Indeed, regularity gives:
\[\varphi(x)\lor x\lor \varphi(x)=(d\lor x\lor d)\lor x\lor (d\lor x\lor d) = d\lor x\lor d = \varphi(x)
\]
and likewise $x\lor \varphi(x)\lor x= \psi(\varphi(x))\lor  \varphi(x)\lor \psi(\varphi(x))=\psi(\varphi(x)) =x$. There is more: $a$ is the unique element in its $\DD$-class belonging to $b\suparrow$ and $c$ is the unique element in the same $\DD$-class belonging to $d\suparrow$ (since each upset $u\suparrow$ intersects any $\DD$-class in at most one element). But  $\varphi(a)$ in $d\suparrow$ behaves in the manner just like $c$, and so $\varphi (a)=c $. Since $b\Drel d$,  $\varphi(b)=d\lor b\lor d=d$  and $\varphi(a\rightarrow b)=\varphi(a)\rightarrow \varphi(b)=c\rightarrow d$, thus giving $a\rightarrow b\Drel c\rightarrow d$.
\end{proof}

Following \cite{bl} a \emph{commutative subset} of a symmetric skew lattice is a non-empty subset
whose elements both join and meet commute.

\begin{thm}\label{theorem:generalized Heyting-skew-Heyting}
Given a co-strongly distributive skew lattice $(S;\land,\lor, 1)$ with top $1$, an operation $\rightarrow$ can be defined on $S$ making  $(S;\land, \lor,\rightarrow, 1)$  a skew Heyting algebra if and only if the operation $\rightarrow$ can be defined on $S/\mathcal D$ making $(S/\mathcal D;\land, \lor,\rightarrow, \mathcal D_1)$ a generalized Heyting algebra.
\end{thm}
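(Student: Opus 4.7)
The plan is to prove both directions by leveraging Proposition 3.4 together with the conormal lifting furnished by Lemma \ref{lemma:conormal}.

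For the forward direction, assume $(S;\land,\lor,\rightarrow,1)$ is a skew Heyting algebra. By Proposition \ref{lemma:implication-Drel}, $\mathcal D$ is a congruence for $\rightarrow$, and since $S$ is co-strongly distributive, $\mathcal D$ is already a congruence for $\land$ and $\lor$; moreover $S/\mathcal D$ is a distributive lattice with top $\mathcal D_1$. Hence $\rightarrow$ descends to a well-defined operation on $S/\mathcal D$. I would then verify that $S/\mathcal D$ is a generalized Heyting algebra by passing the axioms (SH0)--(SH4) through the quotient: in $S/\mathcal D$ all elements commute, so every term of the form $u\lor y\lor u$ collapses to $u\lor y$, and (SH1)--(SH4) specialize to precisely (H1)--(H4) for the induced implication on $S/\mathcal D$.

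For the backward direction, assume $S/\mathcal D$ is a generalized Heyting algebra with implication $\rightarrow$. For each $u\in S$, the upset $u\suparrow$ is a bounded distributive lattice with bottom $u$ and top $1$ (by co-strong distributivity and conormality). I would show that the quotient map restricts to a lattice isomorphism
\[
\pi_u:\ u\suparrow\ \longrightarrow\ (\mathcal D_u)\suparrow,\qquad x\mapsto \mathcal D_x.
\]
Surjectivity and injectivity both follow from Lemma \ref{lemma:conormal}: each $\mathcal D$-class above $\mathcal D_u$ contains exactly one element above $u$. Since $(\mathcal D_u)\suparrow$ is a Heyting algebra (as a principal upset in the generalized Heyting algebra $S/\mathcal D$), we may transport its implication back through $\pi_u$ to obtain an operation $\rightarrow_u$ making $(u\suparrow;\land,\lor,\rightarrow_u,1,u)$ a Heyting algebra. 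Defining $a\rightarrow b=(b\lor a\lor b)\rightarrow_b b$ then endows $S$ with a skew Heyting lattice structure in the sense of Section 3, and hence a skew Heyting algebra structure.

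The main subtlety, which is where I expect to spend most of the care, is consistency between the locally-defined $\rightarrow_u$ and the globally-defined $\rightarrow$. Concretely, one must check that the $\rightarrow_b$ used in the formula $a\rightarrow b=(b\lor a\lor b)\rightarrow_b b$ is compatible with $\rightarrow_u$ whenever $a,b\in u\suparrow$; this is exactly the assertion of Lemma \ref{lemma:impl-well-defined}, so it suffices to confirm that our transported implications satisfy its hypotheses, which follows because all the Heyting algebras $u\suparrow$ are obtained by transport from the single generalized Heyting algebra $S/\mathcal D$ and the $\pi_u$ commute with inclusions $u\suparrow\supseteq v\suparrow$ for $u\leq v$. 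Once this compatibility is in hand, the construction yields a genuine skew Heyting algebra and the biconditional is established.
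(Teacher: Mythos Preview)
Your argument is correct. For the backward direction you do exactly what the paper does: observe that the quotient map restricts to a lattice isomorphism $u\suparrow\cong \mathcal D_u\suparrow$ (the paper phrases this as $u\suparrow$ being a $\mathcal D$-class transversal of the principal filter $S\lor u\lor S$, which amounts to your appeal to Lemma~\ref{lemma:conormal}) and transport the Heyting structure back. Your forward direction, however, takes a different route: you invoke Proposition~\ref{lemma:implication-Drel} to push $\rightarrow$ down to $S/\mathcal D$ and then check that (SH1)--(SH4) collapse to (H1)--(H4) in the commutative quotient. The paper instead reuses the same isomorphism $u\suparrow\cong \mathcal D_u\suparrow$ in the other direction, so that every upset of $S/\mathcal D$ inherits a Heyting structure from the corresponding upset of $S$; this handles both implications symmetrically in one stroke and avoids any appeal to Proposition~\ref{lemma:implication-Drel}. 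Finally, your closing paragraph on compatibility of the $\rightarrow_u$'s is unnecessary: by the definition of a skew Heyting lattice you only need each $u\suparrow$ to carry \emph{some} Heyting implication, after which the global $\rightarrow$ is \emph{defined} by $a\rightarrow b=(b\lor a\lor b)\rightarrow_b b$; Lemma~\ref{lemma:impl-well-defined} is then a consequence of this definition, not a hypothesis you must verify before declaring $S$ a skew Heyting algebra.
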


\begin{proof}
To begin, for any $u$ in $S$, the upset $u\suparrow$ is a $\DD$-class transversal of the principal filter $S\lor u\lor S$. Secondly, the induced homomorphism $\varphi:S\to S/\DD$  is bijective on any commutative subset of $S$ since distinct commuting elements of $S$ lie in distinct $\DD$-classes. It follows that for each $u$ in $S$, $\varphi$ restricts to an isomorphism of upsets, $\varphi_u: u\suparrow \cong \varphi(u)\suparrow$. Thus each upset $u\suparrow$ in $S$ forms a Heyting algebra if and only if each upset $v\suparrow$ in $S/\mathcal D$, being some $\varphi (u)\suparrow$, must form a Heyting algebra. The theorem follows. 
\end{proof}

\noindent \emph{Comment.} This result is a near-dual of the important fact that a strongly distributive skew lattice $\mathbf S$
with bottom $0$ is the (necessarily unique) reduct of a skew Boolean algebra if and only if its lattice
image $\mathbf S/\DD$ is the reduct of a (necessarily unique) generalized Boolean algebra. (\cite{L4}, {3.8}.)

We next consider  consequences of the above theorem. The first is on the "skew lattice side" of things and the next is more on the "Heyting side". But first recall the definitions of Green's relations $\LL$ and $\RR$ on a skew lattice:
\begin{eqnarray*}
x\LL y \Leftrightarrow (x\land y=x\, \&\, y\land x=y, \text{ or equivalently } x\lor y=y\, \&\, y\lor x=x),  \\
x\RR y \Leftrightarrow (x\land y=y\, \&\, y\land x=x, \text{ or equivalently } x\lor y=x\, \&\, y\lor x=y).
\end{eqnarray*}
Relations $\LL$ and $\RR$ are   contained in the Green's relation $\DD$ defined above  and $\LL\circ \RR=\RR\circ \LL=\DD$ holds. A skew lattice is called \emph{right-handed} if the relation $\LL$ is trivial, in which case $\DD=\RR$, and it is called \emph{left-handed} if the relation $\RR$ is trivial, in which case $\DD=\LL$. By Leech's Second  Decomposition Theorem \cite{L1} the relations $\LL$ and $\RR$ are   congruences on any skew lattice $\mathbf S$, $\mathbf S/\RR$ is left-handed, $\mathbf S/\LL$ is right-handed and the following diagram is a pullback:
 \[\xymatrix{
\mathbf S \ar[r]\ar[d]&\mathbf  S/\RR \ar[d]\\
\mathbf S/\LL  \ar[r] &\mathbf  S/\DD\\
      }
  \]

\begin{cor}\label{new-cor-36}
If $\mathbf S=(S;\land, \lor,1)$ be a co-strongly distributive skew lattice with top $1$, then the following are equivalent:
   \begin{enumerate}
         \item $\mathbf S$ is the reduct of a skew Heyting algebra $(S;\land, \lor, \rightarrow, 1)$.
        \item $\mathbf S/\LL$ is the reduct of a skew Heyting algebra  $(S/\LL;\land, \lor, \rightarrow, 1)$.
            \item $\mathbf S/\RR$ is the reduct of a skew Heyting algebra  $(S/\RR;\land, \lor, \rightarrow, 1)$.
   \end{enumerate}
Moreover, both $\LL$ and $\RR$ are congruences on any skew Heyting algebra.
\end{cor}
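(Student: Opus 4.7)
The plan is to reduce all three conditions to a single external criterion, namely that the maximal lattice image $\mathbf S/\DD$ be a generalized Heyting algebra, by appealing to Theorem \ref{theorem:generalized Heyting-skew-Heyting}. Since $\LL$ and $\RR$ are congruences on any skew lattice and co-strong distributivity together with a top is preserved by such quotients, both $\mathbf S/\LL$ and $\mathbf S/\RR$ are themselves co-strongly distributive skew lattices with top. Because $\LL, \RR \subseteq \DD$ and $\DD = \LL\circ\RR = \RR\circ\LL$, the canonical projections induce natural identifications $(\mathbf S/\LL)/\DD \cong \mathbf S/\DD \cong (\mathbf S/\RR)/\DD$ of bounded distributive lattices. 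Applying Theorem \ref{theorem:generalized Heyting-skew-Heyting} to each of the three algebras in turn then shows that each is the reduct of a (necessarily unique) skew Heyting algebra if and only if this common lattice image $\mathbf S/\DD$ is a generalized Heyting algebra, giving (1)$\Leftrightarrow$(2)$\Leftrightarrow$(3).

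For the \emph{moreover} clause, assume $\mathbf S$ is a skew Heyting algebra; by the equivalence just proved, $\mathbf S/\LL$ inherits a unique skew Heyting implication $\rightarrow'$, and it suffices to check that the quotient map $\pi\colon \mathbf S \to \mathbf S/\LL$ preserves $\rightarrow$. The central step will be to show that for every $u \in S$, the restriction $\pi|_{u\suparrow}\colon u\suparrow \to \pi(u)\suparrow$ is a Heyting algebra isomorphism. Injectivity follows from the commutativity of $u\suparrow$: if $x, x' \in u\suparrow$ and $x \Lrel x'$, then $x' = x \lor x' = x' \lor x = x$. Surjectivity uses the explicit description $u\suparrow = \{u\lor y\lor u \mid y \in S\}$ valid in any conormal skew lattice: given $\bar y \in \pi(u)\suparrow$, any lift $y$ of $\bar y$ produces $u\lor y\lor u \in u\suparrow$ mapping to $\bar y$. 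Since $\pi|_{u\suparrow}$ is thus a bounded lattice isomorphism between Heyting algebras, the uniqueness of the Heyting implication automatically makes it a Heyting algebra isomorphism. Given any $a, b \in S$, applying this isomorphism at $u = b$ together with the defining formula $a \rightarrow b = (b\lor a\lor b) \rightarrow_b b$ then yields $\pi(a \rightarrow b) = \pi(a) \rightarrow' \pi(b)$, so $\LL$ is a congruence for $\rightarrow$ on $\mathbf S$. The argument for $\RR$ is symmetric, and the only mildly delicate point I foresee is verifying surjectivity of $\pi|_{u\suparrow}$, which however is immediate from the coset formula above.
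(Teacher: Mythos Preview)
Your proof is correct and follows essentially the same approach as the paper: both reduce the equivalence of (1)--(3) to Theorem~\ref{theorem:generalized Heyting-skew-Heyting} via the isomorphisms $(\mathbf S/\LL)/\DD \cong \mathbf S/\DD \cong (\mathbf S/\RR)/\DD$, and both establish the congruence claim by showing that the quotient map restricts to a Heyting algebra isomorphism $u\suparrow \cong \pi(u)\suparrow$ for each $u$. Your version is in fact more explicit than the paper's, which simply invokes ``the argument of the preceding theorem'' for the upset isomorphism, whereas you spell out injectivity and surjectivity directly.
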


\begin{proof}
The equivalence of (i)--(iii) is due to the preceding theorem plus the fact that $\mathbf S/\DD$,  $(\mathbf S/\LL)/\DD_{S/\LL}$ and  $(\mathbf S/\RR)/\DD_{S/\RR}$ are isomorphic. Next, the induced map $\rho:\mathbf S\to \mathbf S/\LL$ is at least a homomorphism of co-strongly distributive skew lattices. By the argument of the preceding theorem, it induces isomorphisms between corresponding pairs of upsets, $u\suparrow$ in $S$ and $\LL_u\suparrow$ in $S/\LL$. Thus given $x\rightarrow y=(y\lor x\lor y)\rightarrow_y y$ and $u\rightarrow v= (v\lor u\lor v) \rightarrow_v v$ with $x;,\LL;, u$ and $y;,\LL;, v$ in $S$, both $(y\lor x\lor y)\rightarrow_y y$ and $(v\lor u\lor v)\rightarrow_v v$ are mapped to the common $\LL_{y\lor x\lor y}\rightarrow_{\LL_y} \LL_y$, making $x\rightarrow y;, \LL;, y\rightarrow v$ in $\mathbf S$. A similar argument applies to the induced map $\lambda: \mathbf S\to \mathbf S/\RR$.
\end{proof}

An alternative to the characterization of Theorem \ref{theorem:axiomatization} is given by:

\begin{cor}\label{cor:def-adjunction}
Every skew Heyting algebra satisfies:
\begin{description}
   \item[(SHA)] $x\preceq y\rightarrow z$ if and only if $x\land y\preceq z$.
\end{description}
In particular, $x\rightarrow y=1$ iff $x\preceq y$.

In general, an algebra  $\mathbf S=(S;\land,\lor,\rightarrow, 1)$ of type $(2,2,2,0)$ is a skew Heyting algebra if the following conditions hold:
\begin{enumerate}
\item The reduct $(S;\land, \lor, 1)$ is a co-strongly distributive skew lattice with top $1$. 
\item $y \leq x\rightarrow y$ holds for all $x,y\in S$.
\item $\mathbf S$ satisfies axiom (SHA).
\end{enumerate}
\end{cor}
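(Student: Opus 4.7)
The plan is to handle the two directions separately. For the forward direction---every skew Heyting algebra satisfies (SHA)---I would pass to the maximal lattice image. By Proposition~\ref{lemma:implication-Drel} the relation $\DD$ is a congruence, and Theorem~\ref{theorem:generalized Heyting-skew-Heyting} then makes $\mathbf{S}/\DD$ a generalized Heyting algebra with induced implication $\DD_a\rightarrow\DD_b=\DD_{a\rightarrow b}$. The Heyting adjunction (HA) in $\mathbf{S}/\DD$ reads $\DD_x\land\DD_y\leq\DD_z$ iff $\DD_x\leq\DD_y\rightarrow\DD_z$, which upon translating via $\DD_{a\land b}=\DD_a\land\DD_b$ and $a\preceq b \Leftrightarrow \DD_a\leq\DD_b$ becomes exactly (SHA). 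For the special case, note that $\DD_1=\{1\}$ since anything $\DD$-related to a top equals the top; hence $x\rightarrow y=1$ iff $\DD_x\rightarrow\DD_y$ is the top of $\mathbf{S}/\DD$, iff $\DD_x\leq\DD_y$, iff $x\preceq y$.

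For the converse, assuming (1)--(3), I need to build the skew Heyting lattice structure on $\mathbf{S}$ and verify that the given $\rightarrow$ is the induced implication. Condition (2) is just (SH3), and by (1) each upset $u\suparrow$ is a bounded distributive lattice with top $1$ and bottom $u$. Crucially, as noted in the proof of Theorem~\ref{theorem:generalized Heyting-skew-Heyting}, $u\suparrow$ is a $\DD$-class transversal, so distinct elements of $u\suparrow$ lie in distinct $\DD$-classes and the restriction of $\preceq$ to $u\suparrow$ coincides with $\leq$. By (2), if $c\in u\suparrow$ then $b\rightarrow c\in u\suparrow$, so $\rightarrow$ restricts to an operation on $u\suparrow$. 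Applying (SHA) to elements of $u\suparrow$ then yields $a\leq b\rightarrow c \Leftrightarrow a\land b\leq c$, which is precisely (HA) for the bounded distributive lattice $u\suparrow$. Hence $u\suparrow$ is a Heyting algebra whose implication is the restriction of $\rightarrow$, and $\mathbf{S}$ is a skew Heyting lattice.

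It remains to check that $\rightarrow$ agrees with the induced skew Heyting operation, namely $x\rightarrow y=(y\lor x\lor y)\rightarrow_y y$. By (2) both sides lie in $y\suparrow$, and since $y\suparrow$ meets each $\DD$-class at most once, it suffices to show $\DD_{x\rightarrow y}=\DD_{(y\lor x\lor y)\rightarrow y}$. Running the quotient argument of the forward direction, (SHA) forces $\DD_x\rightarrow\DD_y:=\DD_{x\rightarrow y}$ to be a well-defined Heyting implication on $\mathbf{S}/\DD$, and in any Heyting algebra $a\rightarrow b=(a\lor b)\rightarrow b$, so since $\DD_{y\lor x\lor y}=\DD_y\lor\DD_x$ we obtain the required equality. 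The main obstacle I expect is exactly this last step: producing Heyting implications on each upset $u\suparrow$ does not by itself identify the input $\rightarrow$ with the skew Heyting implication defined by the formula $x\rightarrow y=(y\lor x\lor y)\rightarrow_y y$; bridging the two uses of $\rightarrow$ relies on (SHA) being strong enough to make $\rightarrow$ descend well to $\mathbf{S}/\DD$.
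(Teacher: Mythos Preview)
Your proposal is correct and follows essentially the same approach as the paper: the forward direction passes to $\mathbf S/\DD$ (using Proposition~\ref{lemma:implication-Drel} and Theorem~\ref{theorem:generalized Heyting-skew-Heyting}) and reads off (SHA) from (HA), while the converse first uses (2) and (SHA) restricted to each $u\suparrow$ to make it a Heyting algebra, and then identifies the given $\rightarrow$ with the induced skew Heyting implication by showing both $x\rightarrow y$ and $(y\lor x\lor y)\rightarrow_y y$ lie in $y\suparrow$ and are $\DD$-equivalent. The only cosmetic difference is in this last step: the paper applies the already-proved forward direction to the derived implication $\rightarrow^*$ to obtain (SHA) for it and hence $\DD$-equivalence with $\rightarrow$, whereas you argue directly that (SHA) makes $\DD_{x\rightarrow y}$ depend only on $\DD_x,\DD_y$ and then invoke $a\rightarrow b=(a\lor b)\rightarrow b$; these are the same idea phrased two ways.
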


\begin{proof}
Given that $S$ is a skew Heyting algebra, since the induced epimorphism $\varphi:S\to S/\DD$ is a homomorphism of skew Heyting algebras we have
\[x\preceq y\rightarrow z \text{ iff } \varphi(x)\leq \varphi(y)\rightarrow \varphi(z) \text{ iff } \varphi(x)\land \varphi(y)\leq \varphi(z) \text{ iff } x\land y\preceq z.
\]
Conversely, let $\mathbf S=(S;\land,\lor,\rightarrow, 1)$  be an algebra of type $(2,2,2,0)$ satisfying (1)--(3). Suppose that $x,y,z$ lie in a common upset $u\uparrow$. Since $\preceq$ is just $\leq $ in $u\uparrow$ nad $y\rightarrow z$ lies in $u\uparrow$ by (2) we have $x\leq y\rightarrow z$ iff $x\land y \leq z$ in $u\uparrow$. $(S;\land,\lor, 1)$ is thus at least a skew Heyting lattice. Now consider the derived implication $\rightarrow^*$ given by $x\rightarrow^* y= (y\lor x\lor y)\rightarrow_y y$. Both $y\rightarrow z$ and $y\rightarrow^* z$ satisfy (SHA) and thus are $\DD$-equivalent. But since both lie in the sublattice $z\suparrow$, they must be equal.
\end{proof}

We have seen that each skew Heyting algebra is basically a co-strongly distributive skew lattice $\mathbf S$
with top, say $1$, for which $\mathbf S/\DD$ is a generalized Heyting algebra, in which case the Heyting structure
on each upset $u\suparrow$ of $\mathbf S$ is obtained from that of the isomorphic upset $\DD_u\suparrow$ in $\mathbf S/\DD$. This suggests that
all standard classes of generalized Heyting algebras yield classes of skew Heyting algebras whose
maximal commutative images belong to the particular class. We consider several cases.

\noindent Case 1. Finite distributive lattices possess a well-defined Heyting algebra structure. Thus any
finite co-strongly distributive skew lattice with a top, or more generally any co-strongly distributive
skew lattice with a top and a finite maximal lattice image is the reduct of a unique skew Heyting
algebra.

\noindent Case 2. All chains possessing a top 1 form Heyting algebras. Here things are simple:
\[x\rightarrow y =\left\{
\begin{array}{ll}
1; & \text{if } x\leq y.  \\
y; &\text{otherwise}.
\end{array}
\right.
\]
Thus all co-strongly distributive skew chains with a top are skew Heyting algebra reducts, where a \emph{skew chain} is any skew lattice $\mathbf S$ where $\mathbf S/\DD$ is a chain, i.e., $\preceq$ is a total preorder on $\mathbf S$. Here, given $x, y$ in a common $u\suparrow$ one has:
\[x\rightarrow y =\left\{
\begin{array}{ll}
1; & \text{if }x\preceq y.  \\
y; &\text{otherwise}.
\end{array}
\right.
\]

\noindent Case 3. Dual generalized Boolean algebras. These are algebras $\mathbf S=(S;\land,\lor,\dd,1)$ where $(S;\land,\lor, 1)$ is a distributive lattice with top $1$ and $\dd$ is a binary operation on $S$ such that $(y\lor x)\lor (y \dd x) =1$ and $(y\lor x)\land (y\dd x)=y$ for all $x,y$ in $S$. As with $\setminus$ for generalized Boolean algebras, $\dd$ is uniquely determined. Moreover, in this case, $x\rightarrow y=y\dd x$. A dual-skew Boolean algebra $\mathbf S=(S;\land, \lor, \dd ,1)$ is an algebra such that $(S;\land, \lor, 1) $ is a co-strongly distributive skew lattice with top $1$ and binary operation $\dd$ such that:
\begin{eqnarray*}
(y\lor x\lor y)\lor (y\dd x) = 1= (y\dd x)\lor (y\lor x\lor y); \\
(y\lor x\lor y)\land (y\dd x) = y= (y\dd x)\land (y\lor x\lor y).
\end{eqnarray*}
The relevant diagram is:
\[
\xymatrix{
 & 1\ar@{-}[dl] \ar@{-}[dr]  &  \\
 y\lor x\lor y\ar@{-}[dr]& & y\dd x\ar@{-}[dl] \\
 & y & 
   }
\]
These dual algebras are, of course, precisely the co-strongly distributive skew lattices with a top whose maximal lattice images are the lattice reducts of dual-generalized Boolean algebras. Once again we follow the commutative case: $x\rightarrow y=y\dd x$ which now is $y\dd (y\lor x\lor y)$ in $y\suparrow $.

We thus have:

\begin{cor}\label{cor37}
A co-strongly distributive skew lattice with a top $\mathbf S=(S;\land,\lor, 1)$ is the reduct of a uniquely determined skew Heyting algebra $(S;\land,\lor, \dd, 1)$ if any one of the following conditions holds:
\begin{enumerate}
   \item $\mathbf S/\DD$ is finite.
  \item $\mathbf S$ is a skew chain.
   \item $\mathbf S$ is the reduct of a dual generalized Boolean algebra, $\mathbf S=(S;\land, \lor,\dd, 1)$.
\end{enumerate}
\end{cor}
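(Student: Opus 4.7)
My plan is to reduce each of the three cases to Theorem~\ref{theorem:generalized Heyting-skew-Heyting}, which says that $(S;\land,\lor,1)$ is the reduct of a skew Heyting algebra if and only if $S/\DD$ is the reduct of a generalized Heyting algebra. Uniqueness in each case is automatic from the remark following Theorem~\ref{theorem:axiomatization}: once such a $\rightarrow$ exists, it is already implicitly determined by the co-strongly distributive reduct. So the whole task reduces to verifying, in each of the three settings, that the commutative image $S/\DD$ carries a (necessarily unique) generalized Heyting structure. Note that because $\mathbf S$ is co-strongly distributive, it is in particular quasi-distributive, so $\mathbf S/\DD$ is always a distributive lattice with top $\DD_1$; this baseline fact is shared by all three cases.

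For case (1), if $\mathbf S/\DD$ is finite, then for each pair $y,z\in S/\DD$ the set $\{x\in S/\DD\,|\,x\land y\leq z\}$ is a finite, nonempty (it contains $z$) subset of a distributive lattice and is closed under finite joins by distributivity, hence contains a maximum. Defining $y\rightarrow z$ to be this maximum gives (HA). For case (2), if $\mathbf S$ is a skew chain then $\mathbf S/\DD$ is a chain with top, and the explicit formula displayed in the paper ($y\rightarrow z=1$ if $y\leq z$, else $y\rightarrow z=z$) is immediately checked to satisfy (HA).

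Case (3) is the step where I would be most careful. Assuming the additional operation $\dd$ on $S$ satisfies the two displayed identities, I would first check that $\DD$ is a congruence for $\dd$, so that $\dd$ descends to $S/\DD$. Since $\DD$ is already a congruence for $\land,\lor$ and the identities defining $\dd$ are quasi-equations in $\land,\lor,\dd$, the image operation on $S/\DD$ inherits the commutative dual-generalized-Boolean axioms $(y\lor x)\lor(y\dd x)=1$ and $(y\lor x)\land(y\dd x)=y$. In that commutative setting the assignment $x\rightarrow y:=y\dd x$ is well known to provide the Heyting implication: indeed, from the two defining identities one verifies directly that $x\land (y\dd x)\leq y$ and that whenever $x\land z\leq y$ one has $z\leq y\dd x$, giving (HA). Hence $\mathbf S/\DD$ is a generalized Heyting algebra.

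The main obstacle is purely bookkeeping in case (3), namely tracking that $\dd$ does pass to the quotient and gives the commutative dual generalized Boolean structure; the other two cases are one-line reductions to standard facts about commutative lattices. After these three verifications, a single invocation of Theorem~\ref{theorem:generalized Heyting-skew-Heyting} delivers the skew Heyting structure on $\mathbf S$, and uniqueness is a free consequence of the commentary following Theorem~\ref{theorem:axiomatization}.
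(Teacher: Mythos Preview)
Your overall strategy is exactly the paper's: the corollary is presented as a summary of Cases~1--3 just above it, each of which reduces to Theorem~\ref{theorem:generalized Heyting-skew-Heyting} by checking that $\mathbf S/\DD$ is a generalized Heyting algebra, with uniqueness coming for free from the remark after Theorem~\ref{theorem:axiomatization}. Your treatments of (1) and (2) are correct and match the paper's.

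There is, however, a genuine gap in your Case~(3). The sentence ``Since $\DD$ is already a congruence for $\land,\lor$ and the identities defining $\dd$ are quasi-equations in $\land,\lor,\dd$, the image operation on $S/\DD$ inherits the commutative dual-generalized-Boolean axioms'' is a non sequitur: identities pass to a quotient only once you know the quotient is by a congruence for \emph{all} operations in the signature, and that $\DD$ is a congruence for $\dd$ is precisely the point at issue. Calling it ``bookkeeping'' does not discharge it. The paper avoids this step entirely: in its Case~3 discussion it simply invokes the dual of the standard skew Boolean fact (see the Comment after Theorem~\ref{theorem:generalized Heyting-skew-Heyting}, citing \cite{L4},~3.8) that the maximal lattice image of a (dual) skew Boolean algebra is a (dual) generalized Boolean algebra, and dual generalized Boolean algebras are generalized Heyting algebras via $x\rightarrow y=y\dd x$.

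If you want to keep your route, the clean fix is not to push $\dd$ through the quotient at all. Since $\mathbf S$ is a dual skew Boolean algebra, each $u\suparrow$ is a Boolean algebra; the proof of Theorem~\ref{theorem:generalized Heyting-skew-Heyting} already gives a lattice isomorphism $u\suparrow\cong \DD_u\suparrow$, so every principal upset in $\mathbf S/\DD$ is Boolean, hence Heyting, and $\mathbf S/\DD$ is a generalized Heyting algebra. That replaces your congruence check with a one-line transfer along an isomorphism you already have available.
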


Implicit in Case 3 is a basic duality that occurs for skew lattices. Given a skew lattice $\mathbf S=(S;\land, \lor)$, its (vertical) \emph{dual} is the skew lattice $\mathbf S^\updownarrow=(S;\land^\updownarrow, \lor^\updownarrow)$, where as binary functions, $\land^\updownarrow=\lor$ and $\lor^\updownarrow=\land$. Clearly $\mathbf S^{\updownarrow \updownarrow}=\mathbf S$ and any homomorphism $f:\mathbf S\to\mathbf  T$ of skew lattices ia also a homomorphism from $\mathbf S^\updownarrow$ to $\mathbf T^\updownarrow$; moreover a skew lattice $\mathbf S$ is distributive (or symmetric) iff $\mathbf S^\updownarrow$ is thus. Either $\mathbf S$ or $\mathbf S^\updownarrow$ is strongly distributive iff the other is co-strongly distributive; more generally, $\mathbf S$ or $\mathbf S^\updownarrow$ is normal iff the other is co-normal. Also, one has a bottom element iff the other has a top element, both being the same element in $S$.

Expanding the signature, $(S;\land, \lor,\setminus, 0) $ is a skew Boolean algebra if and only if its dual  $(S;\land^\updownarrow, \lor^\updownarrow,\dd, 1) $ is a dual skew Boolean algebra where $\setminus$ and $0$ are replaced by $\dd$ and $1$. Thus any skew Boolean algebra $(S;\land, \lor, \setminus, 0)$ induces a skew Heyting algebra $(S;\land^\updownarrow, \lor^\updownarrow,\rightarrow, 1) $ where $x\rightarrow y=y\setminus x$ and $1=\text{old } 0$. Standard examples of skew Boolean algebras thus give us:

\begin{exmp}
Given sets $X$ and $Y$, the skew Heyting operations derived from the skew Boolean operations on the set $\mathcal P(X,Y)$ of all partial functions from $X$ to $Y$ are as follows. 
\begin{center}
  \begin{tabular}{| c | c | c| }
\hline
    skew Heyting operation & description & skew Boolean operation \\ \hline
    $f\land g$ & $f\cup (g|_{\dom g-\dom f})$ & $f\lor g$ \\
 $f\lor g$ & $g|_{\dom g\cap \dom f}$ & $f\land g$ \\ 
 $f\rightarrow g$ & $g|_{\dom g-\dom f}$ & $g\setminus f$ \\ 
    $1$ & $\emptyset$  & $0$ \\
\hline
  \end{tabular}
\end{center}
\end{exmp}

\begin{exmp}
Given a surjective function $\pi: Y \to X$, let $Sec(\pi)$ denote the set of
all \emph{sections} of $\pi$, that is, functions $f$ from subsets $U$ of $X$ to $Y$ such that $\pi \circ f =
id_{\dom (f)}$. Skew Heyting algebra operations and corresponding skew Boolean
operations are defined on $Sec(\pi)$ using precisely the above descriptions. At first
glance this seems to be just a subalgebra of the type of algebra in Example 1. The
latter, however, is isomorphic to  $Sec(\pi)$ where $\pi$  is now the coordinate projection
of $ X\times Y$ onto $X$. Modifications of this example arise in the dualities of the next
section.

It so happens that any right-handed (co-)strongly distributive skew lattice is
isomorphic to a subset of partial functions in some $\mathcal  P(X, Y)$ that is closed under the
relevant $\land $ and $\lor $ operations above. (See [12] Section 3.7.)  It follows that the
skew lattice reduct of a skew Heyting algebra is isomorphic to some such partial
function algebra. The difference of this more general case from that of the two
examples above is that here $x\rightarrow y$ need not have a polynomial definition, unlike the
co-Boolean case where $x\rightarrow y = y\setminus x$.
\end{exmp}


The following result is useful for computing in skew Heyting algebras.

\begin{prop}\label{prop:imp-or}
Let $\mathbf S=(S;\land,\lor,\rightarrow, 1)$ be a skew Heyting algebra and $x,y,z\in S.$ Then
\[(x\lor y\lor x)\rightarrow z=(x\rightarrow z)\land (y\rightarrow z)\land (x\rightarrow z).
\]
\end{prop}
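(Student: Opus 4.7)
The plan is to reduce the identity to the classical Heyting-algebra identity $(a\lor b)\rightarrow c = (a\rightarrow c)\land (b\rightarrow c)$, carried out inside the commutative Heyting algebra $z\suparrow$. By (SH3) both $x\rightarrow z$ and $y\rightarrow z$ lie in $z\suparrow$, and by (SH0) each of the implications appearing in the identity agrees with the corresponding Heyting implication $\rightarrow_z$ in $z\suparrow$; so once the right elements are identified, the statement becomes a computation in a single commutative Heyting algebra.

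The first step is to rewrite the left-hand side as
\[
(x\lor y\lor x)\rightarrow z = (z\lor x\lor y\lor x\lor z)\rightarrow_z z
\]
via (SH0), and to observe, as a lemma in $S$, the equality
\[
z\lor x\lor y\lor x\lor z \;=\; (z\lor x\lor z)\lor (z\lor y\lor z).
\]
Expanding the right-hand side and collapsing the middle pair of $z$'s by idempotency gives $z\lor x\lor z\lor y\lor z$, which reduces to $z\lor x\lor y\lor z$ by regularity of $\lor$. Expanding the left-hand side via conormality (with the grouping $(z\lor x)\lor y\lor x\lor z$, swapping $y$ and $x$) and then using idempotency likewise yields $z\lor x\lor y\lor z$, so the two agree.

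With that reformulation in hand, set $a=z\lor x\lor z$ and $b=z\lor y\lor z$; both lie in $z\suparrow$. Applying the commutative identity (H4)-equivalent $(a\lor b)\rightarrow_z z = (a\rightarrow_z z)\land (b\rightarrow_z z)$ in the Heyting algebra $z\suparrow$, and invoking (SH0) once more to rewrite $a\rightarrow_z z = x\rightarrow z$ and $b\rightarrow_z z = y\rightarrow z$, gives
\[
(x\lor y\lor x)\rightarrow z = (x\rightarrow z)\land (y\rightarrow z).
\]
Finally, because $x\rightarrow z$ and $y\rightarrow z$ lie in the commutative sublattice $z\suparrow$, idempotency and commutativity of $\land$ there let us append an extra $(x\rightarrow z)$ on the right without changing the value, producing the form stated in the proposition.

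No step is really an obstacle; the only thing that needs care is the small combinatorial identity $z\lor x\lor y\lor x\lor z = z\lor x\lor y\lor z$, which is precisely where the non-commutative axioms (conormality and regularity of $\lor$) are used, before everything else happens inside the commutative algebra $z\suparrow$.
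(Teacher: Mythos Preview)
Your argument is correct. It differs from the paper's proof in an instructive way. The paper argues indirectly: since $\DD$ is a congruence on skew Heyting algebras (Proposition~\ref{lemma:implication-Drel}) and $\mathbf S/\DD$ is a generalized Heyting algebra, the two sides are $\DD$-related; as both lie in $z\suparrow$ by (SH3), the uniqueness part of Lemma~\ref{lemma:conormal} forces equality. Your proof instead stays entirely inside the Heyting algebra $z\suparrow$, using conormality and regularity of $\lor$ only to verify the combinatorial identity $z\lor x\lor y\lor x\lor z = (z\lor x\lor z)\lor(z\lor y\lor z)$, after which the classical identity $(a\lor b)\rightarrow c=(a\rightarrow c)\land(b\rightarrow c)$ finishes the job. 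Your route is more self-contained---it avoids invoking that $\DD$ respects $\rightarrow$ and the passage to the quotient---and it yields the sharper intermediate statement $(x\lor y\lor x)\rightarrow z=(x\rightarrow z)\land(y\rightarrow z)$ before symmetrizing. The paper's route, on the other hand, illustrates a reusable template: check an identity modulo $\DD$, then upgrade to equality via conormality when both sides lie in a common upset.
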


\begin{proof}
As $\mathbf S/\DD$ is a generalized Heyting algebra and relation $\DD$ respects all skew Heyting algebra operations, it follows that 
$(x\lor y\lor x)\rightarrow z \,\DD\, (x\rightarrow z)\land (y\rightarrow z)\land (x\rightarrow z).
$
However, both $(x\lor y\lor x)\rightarrow z$ and $(x\rightarrow z)\land (y\rightarrow z)\land (x\rightarrow z)$ are above $z$ with respect to the natural partial order, and hence must be equal by Lemma \ref{lemma:conormal}.
\end{proof}

 A skew lattice $\mathbf S$ is \emph{meet [join]
complete} if each nonempty commutative subset possesses an infimum [a supremum] in $S$. It follows from the dual of \cite{bl} Proposition 2.10 that if $\mathbf S$ is a meet complete co-strongly distributive  skew lattice with $1$, then $\mathbf S$ is complete.
We call a skew Heyting algebra  \emph{complete} if it is complete as a skew lattice.

\section{Connections to duality}

 Dual skew Boolean algebras are order duals (upside-downs) to usually studied skew Boolean algebras. Skew Boolean algebras and dual skew Boolean algebras are categorically isomorphic. Right-handed (dual) skew Boolean algebras are dually equivalent to sheaves over locally compact Boolean spaces by results of \cite{BCV} and \cite{K}, where a \emph{locally compact Boolean space} is a topological space whose one-point-compactification is a Boolean space.  
The obtained duality asserts that any right- [left-]handed skew Boolean algebra is isomorphic to the skew Boolean algebra of compact open sections (i.e. sections over compact open subsets) of an \'etale map over some locally compact Boolean space. Let us  note that the restriction to right- [left-]handed algebras is not a major restriction since  Leech's Second Decomposition Theorem yields that any skew lattice is a pull back of a left-handed and a right-handed skew lattice over their common maximal lattice image \cite{L1}. The general two-sided case was also covered in \cite{BCV}.

Bounded distributive lattices are dual  to Priestley spaces; in this duality each bounded distributive lattice is represented as the distributive lattice of all clopen (i.e. closed and open) upsets of a Priestley space.
The Esakia duality established in \cite{esakia} yields that Heyting algebras are dual to \emph{Esakia spaces}, i.e. those Priestley spaces in which the downset of each clopen set is again clopen.  
 Moreover, if $(X, \leq, \tau)$ is an Esakia space then given clopen subsets $U$ and $V$ in $X$ the implication is defined by
\[U\rightarrow V=X\setminus \duparrow (U\setminus V).
\]

Duality for strongly distributive skew lattices was recently established in \cite{skew-priestley}. It yields that right-handed strongly distributive skew lattices with bottom  are dual  to the sheaves over  locally Priestley spaces, where by a \emph{locally Priestley space} we mean an ordered topological space whose one-point-compactification is a Priestley space. Via the obtained duality each right-handed strongly distributive skew lattice with bottom is represented as a skew lattice of sections over copen (i.e. compact and open) downsets of a locally Priestley space, with the operations being defined as follows:
\begin{align*}
  0 &= \emptyset,\\
  r \land  s &= \restricted{s}{\dom{r} \cap \dom{s}},\\
  r \lor s &= r \cup \restricted{s}{\dom{s} - \dom{r}},\\
  r \setminus s &= \restricted{r}{\dom{r} - \dom{s}}.
\end{align*}

Given a distributive lattice $\mathbf L$ denote by $\mathbf L^c$ the distributive lattice that is obtained from $\mathbf L$ by reversing the order. Denote by $\mathbf{DL}$ the category of all distributive lattices, by $\mathbf{PS}$ the category of all locally Priestley spaces and consider the functors:
\[\begin{array}{rccl}
^c: & \mathbf{DL} & \to & \mathbf{DL} \\
&\mathbf  L & \mapsto &\mathbf  L^c
\end{array} \qquad
\text{and}
\qquad
\begin{array}{rccl}
^r: & \mathbf{PS} & \to & \mathbf{PS} \\
& (X,\leq) & \mapsto & (X, \geq).
\end{array}
\]
Restricting the functors $^c$ and $r$ to the categories $\mathbf{HA}$ of all Heyting algebras
and $\mathbf{ES}$ of all Esakia spaces, respectively, yields the following isomorphism of categories:
\[\begin{array}{rccl}
^c: & \mathbf{HA} & \to & \mathbf{cHA} \\
& \mathbf L & \mapsto &\mathbf  L^c
\end{array} \qquad
\text{and}
\qquad
\begin{array}{rccl}
^r: & \mathbf{ES} & \to & \mathbf{cES} \\
& (X,\leq) & \mapsto & (X, \geq),
\end{array}
\]
where $\mathbf{cHA}$ denotes the category of all \emph{co-Heyting algebras} (defined as order-inverses of Heyting algebras) and $\mathbf{cES}$ denotes the category of all 
\emph{co-Esakia spaces} the latter being introduced in \cite{bitop} as Priestley spaces in which an upset of a clopen is again clopen.

We introduce the following categories:
\[
\begin{array}{l}
\mathbf{SDSL}: \text{ strongly distributive skew lattices with $0$},\\
\mathbf{cSDSL}: \text{ co-strongly distributive skew lattices with $1$},\\
\mathbf{SHA}: \text{ skew Heyting algebras},\\
\mathbf{cSHA}: \text{ co-skew Heyting algebras},
\end{array}
\]
with the latter being defined as the category of all algebras of the form $\mathbf S^c$, where $\mathbf S$ is a skew Heyting algebra and 
\[\begin{array}{rccl}
^c: & \mathbf{SDSL} & \to & \mathbf{cSDSL} \\
& \mathbf S & \mapsto &\mathbf  S^c
\end{array} 
\]
is the isomorphism of categories that turns a skew lattice upside-down. The restriction of the functor $^c$ to the categories  $\mathbf{cSHA}$  and $\mathbf{SHA}$ yields the isomorphism:
\[\begin{array}{rccl}
^c: & \mathbf{cSHA} & \to & \mathbf{SHA} \\
&\mathbf  S & \mapsto &\mathbf  S^c
\end{array} .
\]
The isomorphism of categories induces an isomorphism of concepts:
\[
\begin{tabular}{|c|c|}
\hline
$\mathbf{SHA}$ & $\mathbf{cSHA}$ \\
\hline
$\land$ & $\lor$ \\
$\lor$ & $\land$ \\
$1$ & $0$ \\
strong codistributivity & strong distributivity \\
filter & ideal \\
prime filter & prime ideal \\
\hline
\end{tabular}
\]

It follows from  Theorem \ref{theorem:generalized Heyting-skew-Heyting} that the skew Heyting algebra structure can be imposed exactly on those co-strongly distributive skew lattices with top whose maximal lattice image is a generalized Heyting algebra.   Therefore the duality for right-handed skew Heyting algebras  yields that they are dual to sheaves over \emph{local Esakia spaces}, i.e. ordered topological spaces whose one-point-compactification is an Esakia space.

Let $(B, \leq)$ be an Esakia space, $E$ a topological space and $p:E\to B$ a surjective \'etale map. Consider the set $S$ of all  sections of $p$ over copen upsets in $B$, i.e.  an element of $S$ is a map $s:U\to E$, where  $U$ is a copen upset in $B$, that satisfies the property $p\circ s= \id_U$. A section $s\in S$ is considered to be below a section $r\in S$ when $s$ extends $r$. 
The skew Heyting operations are defined on $S$ by:
\begin{eqnarray*} 
r \lor s = \restricted{s}{\dom{r}\cap \dom{s}}, \\
r\land s = r\cup \restricted{s}{\dom{s}\setminus \dom{r}}, \\
r\rightarrow s = \restricted{r}{\suparrow(\dom{s}\setminus \dom{r})}\\
1 = \emptyset.
\end{eqnarray*}
\begin{thm}
Let $p:E\to B$ be a surjective \'etale map over a local Esakia space $B$. Then the set $S$ of all sections of $p$ over copen upsets in $B$ forms a skew Heyting algebra under the above operations.
\end{thm}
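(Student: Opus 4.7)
The plan is to reduce the theorem to Theorem~\ref{theorem:generalized Heyting-skew-Heyting}: it suffices to verify that $(S;\land,\lor,1)$ is a co-strongly distributive skew lattice with top, that $S/\DD$ is a generalized Heyting algebra, and then that the displayed formula for $\rightarrow$ realises the implication produced by that theorem.

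For the skew-lattice reduct, the operations $\land$ and $\lor$ are the vertical duals of those in the sheaf representation of strongly distributive skew lattices given in \cite{skew-priestley}; since a local Esakia space is in particular a local Priestley space, that representation immediately yields a co-strongly distributive skew lattice structure on $S$ with top $\emptyset$. Alternatively, a direct check of idempotency, absorption, and the two co-distributivity identities from the set-theoretic definitions of $\land$ and $\lor$ is short and routine. Two sections are $\DD$-related precisely when they share a domain, so $r\mapsto\dom r$ induces an isomorphism between $S/\DD$ and the set of copen upsets of $B$ equipped with $\DD_r\land\DD_s\leftrightarrow\dom r\cup\dom s$, $\DD_r\lor\DD_s\leftrightarrow\dom r\cap\dom s$ and top $\emptyset$; the induced natural order $\DD_r\leq\DD_s$ becomes reverse inclusion $\dom s\subseteq\dom r$.

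The substantive step is to show that this reversed lattice of copen upsets carries a generalized Heyting implication given by $U\to V=\suparrow(V\setminus U)$. Working in the one-point compactification $B^+$ (an Esakia space by hypothesis), the difference $V\setminus U$ of two clopen upsets is clopen, and the Esakia property, transferred through the order-reversing passage to $S/\DD$, guarantees that $\suparrow(V\setminus U)$ is a clopen upset of $B^+$ and therefore a copen upset of $B$. A direct adjunction check then yields
\[
W\supseteq\suparrow(V\setminus U)\iff V\setminus U\subseteq W\iff V\subseteq U\cup W,
\]
for every copen upset $W$, which is exactly condition (HA) for the reversed lattice, establishing that $S/\DD$ is a generalized Heyting algebra.

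By Theorem~\ref{theorem:generalized Heyting-skew-Heyting}, $S$ then carries a unique skew Heyting algebra structure. Since axiom (SH3) forces $s\leq r\to s$ in the natural partial order, the representative of $\DD_{r\to s}$ lying in $s\suparrow$ is the unique section whose domain is $\suparrow(\dom s\setminus\dom r)$ and which agrees with $s$ where both are defined; a direct computation identifies this section with the expression in the statement. The principal obstacle is the middle step: propagating the Esakia condition on $B^+$ down to the copenness of $\suparrow(V\setminus U)$ in $B$ while correctly handling the point at infinity. Once that is in hand, the rest is essentially bookkeeping assembled from Theorem~\ref{theorem:generalized Heyting-skew-Heyting} and the uniqueness of the skew Heyting implication.
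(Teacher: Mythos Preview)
The paper states this theorem but supplies no proof: the statement is immediately followed by the bibliography. There is therefore nothing in the paper to compare your argument against.

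Your strategy via Theorem~\ref{theorem:generalized Heyting-skew-Heyting} is the natural one, and your adjunction computation for the reversed lattice of copen upsets is correct. Two points deserve attention. First, you are right to flag the direction of the Esakia condition as the principal obstacle: the standard Esakia axiom says that \emph{downsets} of clopens are clopen, whereas what you need is that $\suparrow(V\setminus U)$ is copen. This only works once the order convention on $B^{+}$ (in particular the placement of $\infty$) is set up so that the Esakia axiom delivers precisely this; the paper's discussion of the $^{c}$ and $^{r}$ functors suggests that is the intent, but you should make the convention explicit rather than hide it in the phrase ``transferred through the order-reversing passage''.

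Second, the displayed formula in the paper reads $r\rightarrow s = \restricted{r}{\suparrow(\dom{s}\setminus \dom{r})}$, restricting $r$; but $\suparrow(\dom s\setminus\dom r)$ need not lie inside $\dom r$, while it is always contained in the upset $\dom s$. Your own description (``agrees with $s$ where both are defined'') correctly yields $\restricted{s}{\suparrow(\dom{s}\setminus \dom{r})}$, which is exactly what (SH3) forces. So your final sentence, claiming that your section ``identifies with the expression in the statement'', is not quite right: you should note that the paper's formula appears to contain a typo and record the corrected version.
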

%


\end{document}